\DeclareMathAlphabet{\mathpzc}{OT1}{pzc}{m}{it}
\newtheorem{theorem}{Theorem}[section]
\newtheorem{lemma}[theorem]{Lemma}
\newtheorem{definition}[theorem]{Definition}
\newtheorem{proposition}[theorem]{Proposition}
\newtheorem{remark}[theorem]{Remark}
\newtheorem{lemma-conjecture}[theorem]{Lemma--Conjecture}
\numberwithin{equation}{theorem}
\renewcommand{\mathcal}{\mathscr}
\newcommand{\SI}{{\mathcal{I}}}
\renewcommand{\mathbb}{\mathbf}
\newcommand{\WY}{\widetilde{Y}}
\title{Deformations of canonical double covers}
\author{Francisco Javier Gallego}
\author{Miguel Gonz\'alez}
\author{\\ Bangere P. Purnaprajna}
\address{Departamento de \'Algebra, Universidad Complutense de Madrid and Instituto de Matem\'atica Interdisciplinar, Madrid, Spain}
\email{gallego@mat.ucm.es}
\address{Departamento de \'Algebra, Universidad Complutense de Madrid, Madrid, Spain}
\email{mgonza@mat.ucm.es}
\address{Department of Mathematics, University of Kansas, Lawrence, Kansas, USA }
\email{purna@math.ku.edu}
\begin{document}

\begin{abstract}
In this paper, we show that if $X$ is a smooth variety of general type of dimension $m \geq 2$, for which its canonical map induces a double cover onto $Y$, where $Y$ is a projective bundle over $\mathbf P^1$, or onto a projective space or onto a quadric hypersurface, embedded by a complete linear series, then the general deformation of the canonical morphism of $X$ again is canonical and again induces a double cover. The second part of the article deals with the existence or non existence of canonical double structures on rational varieties. The negative result  in this article has consequences for the moduli of varieties of general type of arbitrary dimension. The results here show that there is an entire component, that is hyperelliptic in infinitely many moduli spaces of higher dimensional varieties of general type. This is in sharp contrast with the case of curves or surfaces of lower Kodaira dimensions.
\end{abstract}

\maketitle

\section*{Introduction}
The canonical covers of rational varieties such as the projective space, projective bundles over a curve and so on have a ubiquitous presence in the geometry of varieties of general type. They occur in various situations and compellingly in the extremal cases. For example in the case of the surfaces on the Noether line $K_{X}^2=2p_g-4$. This was studied and classified by Horikawa in \cite{Hor2p_g-4}. The higher dimensional analogues of these results have been proved in \cite{Fujita}, \cite{Kobayashi} and more recently in \cite{CPG}. In both these cases it turns out that the varieties of general type on the Noether line are canonical double covers of varieties of minimal degree, that is canonical double covers of certain rational varieties. 
\smallskip

In this article, we study the deformations of canonical morphism of degree $2$ of a variety of general type of arbitrary dimension, when the image of the morphism $Y$ is a rational variety that is either a $\mathbf P^m$, a projective bundle over a rational curve or a hyper--quadric, that is the canonical double covers of these rational varieties. As mentioned above, they have a ubiquitous presence in the geometry of varieties of general type. We prove that the general deformation of the canonical morphism of degree $2$ is again a canonical morphism and is of degree $2$.  Some of the cases regarding deformation results have a overlap with results in \cite{Fujita}, but the methods and proofs are qualitatively different. 
\smallskip

Some things are known about the components of the moduli space of surfaces of general type, still there is a lot to be understood. But it is largely in the nascent stage for the moduli of varieties of general type of higher dimension. The result of this article has implications to the moduli of varieties of general type. In \cite{MP} and \cite{Hirzebruch}, we constructed infinitely many moduli spaces with ``hyperelliptic'' components. There were moduli spaces in which there was a hyperelliptic component, as well as a locus in another component which parametrized surfaces of general type whose canonical morphism was of degree $2$, but whose general deformations was a $1:1$ map.  The results of this article does show the existence of the hyperelliptic component in infinitely many moduli of varieties of general type of arbitrary dimension. In \cite{Hirzebruch} it was shown that the double canonical covers of rational surfaces, could have invariants whose $K_{X}^2>3p_g-7$. This is the Castelnuovo bound, and any $|K_X|$ that is birational has to satisfy this inequality. In higher dimensions, there is no such bound to the best of our knowledge. But it can be shown that, for instance in threefolds, double canonical covers can have $K_{X}^3>3p_g-7$. There is indeed a correlation between numerology  in various dimensions. It has been shown in a very recent work of Chen, Gallego and Purnaprajna \cite{CPG} that the ``Noether faces'' for higher dimensions are $K_{X}^m=2(p_g-m)$, and that this determines the precise analogues of Horikawa's results for surfaces in higher dimensions. So conjecturally at least, one should expect similar results for birationality of $K_{X}$ in higher dimensions, as Castelnuovo's results are for curves. 

\smallskip

The higher multiplicity structures on smooth varieties occur very naturally in algebraic  geometry, especially in degenerations or deformation of smooth varieties. They are also of significant interest in their own right. They provide a way of constructing, at least theoretically, smooth varieties with given invariants or canonical varieties, that is varieties of general type whose canonical morphism is a birational morphism. For the case of algebraic surfaces, this is a question of Enriques that has evoked much interest among geometers. So a natural question is to explore under what circumstances, do canonical double structures occur on simpler varieties, for example on rational varieties. In Section $2$ we explore the relation between double covers and multiplicity
$2$ ropes or, more precisely, into the relation between the deformation theory of double
covers and the existence or non--existence of multiplicity $2$ ropes. To do so we will
focus on the case of varieties of general type of dimension $m \geq 2 \,$ that are canonical double covers of either a smooth projective bundle $Y=\mathbb P(E)$ (of dimension $m$) over $\mathbf P^1$ or $\mathbf P^m$ or a hyperquadric $\mathbf Q_m$ or $\mathbb Q_2$, and canonically embedded carpets. This relation has been previously studied for the case of the canonical morphism of hyperelliptic curves and canonical ribbons (see~\cite{Fong}) or for
the case of $K3$ double covers of surfaces of minimal degree and Enriques
surfaces and $K3$ carpets or for the case of $\mathbf P^2$ or a Hirzebruch
surface (see~\cite{GPcarpets}, \cite{GGP2} and \cite{Hirzebruch}). In all
of these cases, the following correlation occurs: whenever a double cover $X$ appears as
limit of projective embeddings, then an embedded multiplicity $2$ rope with same invariants as $X$ appears, so as to bear witness to the event of an embedding degenerating to a degree $2$ morphism.
On the contrary, when the double cover cannot be obtained as limit of embeddings, no multiplicity $2$ ropes come into being. The results here show the situation that always happens for the above, that is the case of curves and surfaces of Kodaira dimension zero, does not happen generally for canonical double covers. That is those varieties of general type, whose canonical map admits a double cover of a rational variety. It was shown in \cite{MP} that there are instances where it does happen. But the results in this article, prove that generically it does not happen. This negative result has consequences for the moduli of varieties of general type of arbitrary dimension. The results here show that there is an entire component, that is hyperelliptic. This is in sharp contrast with the case of curves or surfaces of lower Kodaira dimensions. 

\smallskip

{\bf Acknowledgements}: The first and the second author were partially supported by grants MTM2009-06964 and MTM2012-32670 and by the UCM research group 910772. The first author also thanks the Department of Mathematics of the University of Kansas for its hospitality. The third author thanks the National Science foundation (Grant Number: 1206434) for the partial support of this work.

\section{Deformations of a canonical double cover of a projective bundle over $\mathbf P^1$ or a projective space or a quadric hypersurface embedded by a complete linear series}
We work over $\mathbb C$. We will use the following 

\noindent
{\bf Set--up and notation:} 
\begin{enumerate}
\item $X$ is a smooth variety of general type of dimension $m \geq 2$
with ample and base--point--free canonical bundle and canonical map $\varphi :X \to \mathbb P ^N$ of degree $2$. 
\item The image $Y$ of $\varphi$ is one of the following varieties:
	\begin{enumerate}
		\item a smooth projective bundle $Y=\mathbb P(E)$ (of dimension $m$) over $\mathbf P^1$ (where $E=\mathcal O_{\mathbb P^1} \oplus \mathcal O_{\mathbb P^1}(-e_1) \oplus \cdots \oplus \mathcal O_{\mathbb P^1}(-e_{m-1})$ with $0 \leq e_1\leq \cdots \leq e_{m-1}$) embedded, as a non--degenerate variety, in $\mathbb P ^N $ by the complete linear series of a very ample divisor $aT+bF$, for which $T$ is the divisor on $Y$ such
		that $\mathcal O_Y(T) = \mathcal O_{\mathbf P(E)}(1)$ and $F=\mathbb P^{m-1}$ is a fiber, or
		\item projective space $Y=\mathbb P^m$ embedded, as a non--degenerate variety, in $\mathbb P^N$ by the complete linear series of $\mathcal O_{\mathbb P^m}(d)$, for which $d \geq 1$,
		or
		\item a smooth quadric hypersurface $Y=\mathbb Q_m$ in $\mathbb P^{m+1}$ embedded, as a non--degenerate variety, in $\mathbb P^N$ by the complete linear series of $\mathcal O_{\mathbb Q^m}(d)$ for which $d \geq 1$, when $m \geq 3$, or by the complete linear series of $\mathcal O_{\mathbb Q_2}(a, b)$, for which $a \geq 1, b \geq 1$, when $m=2$ .
		
	\end{enumerate}
\item Let $i: Y \hookrightarrow \mathbb P^N$ denote the embedding and let $\pi: X \to Y$ denote the induced double cover such that $\varphi=i \circ \pi$.
\end{enumerate}
\begin{remark}\label{Seshadri.remark}{\rm Under the conditions of our set--up, $\varphi$ factors through a double cover $\pi: X \longrightarrow Y$ whose trace--zero module is a
line bundle $\mathcal E$. In addition, $\varphi$ is the canonical map of $X$ if, and only if $\mathcal E =\omega_Y(-1)$.
We will call $\varphi$ a canonical double cover.

\smallskip
\noindent
Observe that, since $p_g(Y)=0$, the pullback of the complete linear series of $\mathcal O_Y(1)$ is the complete canonical series of $X$.

\smallskip
\noindent
Observe, also, that there exists a smooth double cover $\pi: X \to Y$ with trace--zero module $\omega_Y(-1)$ if, and only if, the linear system $|-2(\omega_Y(-1))|$ contains a smooth member.}
\end{remark}

\begin{remark}
{\rm Under the conditions of our set--up, numerical conditions which are sufficient for $\pi$ to exist are:
\begin{enumerate}
\item In the case $Y=\mathbb P(E)$, with $m \geq 3$
	\begin{enumerate}
		\item $a \geq 1$ and $b \geq a e_{m-1}+1$ (for $\mathcal O_Y(1)=\mathcal O_{\mathbb P(E)}(aT+bF)$ very ample), and
		\item $b \geq -(e_1+\cdots+e_{m-2}+(1-m-a)e_{m-1}+2)$ (for $|-2(\omega_Y(-1))|$ base--point--free).
		\end{enumerate}
		
		\noindent
		In the case $m=2$ of a Hirzebruch surface $Y=\mathbb F_e$
			\begin{enumerate}
				\item $a \geq 1$ and $b \geq a e+1$ (for $\mathcal O_Y(1)=\mathcal O_{\mathbb F_e}(aT+bF)$ very ample), and
				\item $e \leq 3$, or ($b-ae \geq e -2$ or $b-ae=\frac{1}{2}e-2$), if $e \geq 4$ (for $|-2(\omega_Y(-1))|$ to contain a smooth member, see \cite[Lemma 1.6]{Hirzebruch}).
				\end{enumerate}
\item In the case $Y=\mathbb P^m$, with $m \geq 2$, the only condition  $d \geq 1$.

\item In the case $Y=\mathbb Q_m$, with $m \geq 3$, the only condition $d \geq 1$, and in the case $Y=\mathbb Q_2$ the only condition $a, b \geq 1$.
\end{enumerate}}
\end{remark}

\smallskip
\noindent
We start with a version for higher dimensional varieties of \cite[Lemma 3.9]{MP}, where we had explored the situation for the case of an algebraic surface. We now start with a higher dimensional exploration of the results for surfaces. This technical lemma will be a starting point of this study.
\begin{lemma}\label{isom.Hom.Ext}
Let $S$ be a smooth variety of dimension $m \geq 3$, embedded in $\mathbf P^N$, let $\mathcal J$ be the ideal sheaf of $S$ in $\mathbf P^N$  and consider the connecting homomorphism
\begin{equation*}
\mathrm{Hom}(\mathcal J/\mathcal J^2,\omega_{S}(-1)) \overset{\delta} \longrightarrow \mathrm{Ext}^1(\Omega_{S},\omega_{S}(-1)).
\end{equation*}
\begin{enumerate}
\item If $p_g(S)=0$ and $h^{m-1}(\mathcal O_{S}(1))=0$, then $\delta$ is injective;
\item if $h^{m-1}(\mathcal O_{S})=h^{m-2}(\mathcal O_{S}(1))=0$,
then $\delta$ is surjective.
\end{enumerate}
\end{lemma}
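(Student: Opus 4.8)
The plan is to feed the conormal sequence of $S\subset\mathbf P^N$ into $\mathrm{Hom}_S(-,\omega_S(-1))$, identify $\delta$ as the resulting connecting map, and reduce both assertions to the vanishing of a single cohomology group of $T_{\mathbf P^N}|_S\otimes\omega_S(-1)$, which one then controls via the Euler sequence. Concretely, since $S$ is smooth the conormal sequence
\begin{equation*}
0\longrightarrow \mathcal J/\mathcal J^2\longrightarrow \Omega_{\mathbf P^N}|_S\longrightarrow \Omega_S\longrightarrow 0
\end{equation*}
is exact, and applying $\mathrm{Hom}_S(-,\omega_S(-1))$ produces a long exact sequence whose relevant segment is
\begin{equation*}
\mathrm{Hom}(\Omega_{\mathbf P^N}|_S,\omega_S(-1))\longrightarrow \mathrm{Hom}(\mathcal J/\mathcal J^2,\omega_S(-1))\overset{\delta}\longrightarrow \mathrm{Ext}^1(\Omega_S,\omega_S(-1))\longrightarrow \mathrm{Ext}^1(\Omega_{\mathbf P^N}|_S,\omega_S(-1)).
\end{equation*}
Hence $\delta$ is injective as soon as $\mathrm{Hom}(\Omega_{\mathbf P^N}|_S,\omega_S(-1))=0$, and surjective as soon as $\mathrm{Ext}^1(\Omega_{\mathbf P^N}|_S,\omega_S(-1))=0$; and since $\Omega_{\mathbf P^N}|_S$ is locally free, these two groups are $H^0$ and $H^1$ of $T_{\mathbf P^N}|_S\otimes\omega_S(-1)$.

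To handle those, twist the restricted Euler sequence $0\to\mathcal O_S\to\mathcal O_S(1)^{\oplus N+1}\to T_{\mathbf P^N}|_S\to 0$ by $\omega_S(-1)$, obtaining
\begin{equation*}
0\longrightarrow \omega_S(-1)\longrightarrow \omega_S^{\oplus N+1}\longrightarrow T_{\mathbf P^N}|_S\otimes\omega_S(-1)\longrightarrow 0 .
\end{equation*}
For part (1): $H^0(\omega_S)=0$ because $p_g(S)=0$, so $H^0(T_{\mathbf P^N}|_S\otimes\omega_S(-1))$ embeds into $H^1(\omega_S(-1))\cong H^{m-1}(\mathcal O_S(1))^\vee$, which vanishes by hypothesis; therefore $\mathrm{Hom}(\Omega_{\mathbf P^N}|_S,\omega_S(-1))=0$ and $\delta$ is injective. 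For part (2): the cohomology sequence squeezes $H^1(T_{\mathbf P^N}|_S\otimes\omega_S(-1))$ between $H^1(\omega_S)^{\oplus N+1}\cong\bigl(H^{m-1}(\mathcal O_S)^\vee\bigr)^{\oplus N+1}$ and $H^2(\omega_S(-1))\cong H^{m-2}(\mathcal O_S(1))^\vee$, and both of these vanish under the stated hypotheses (it is here that $m\geq 3$ enters: it is what places the Serre-duality indices $m-1$ and $m-2$ in the range where the assumed vanishings are the relevant ones — for $m=2$ one would run into $H^0(\mathcal O_S(1))\neq 0$); hence $\mathrm{Ext}^1(\Omega_{\mathbf P^N}|_S,\omega_S(-1))=0$ and $\delta$ is surjective.

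The whole argument is essentially a diagram chase combined with Serre duality, so there is no serious obstacle; the only points demanding care are bookkeeping ones — getting the direction of the connecting map right, so that one knows which flanking group controls injectivity and which controls surjectivity, and checking that the mere vanishing of $\mathrm{Hom}(\Omega_{\mathbf P^N}|_S,\omega_S(-1))$ (respectively $\mathrm{Ext}^1(\Omega_{\mathbf P^N}|_S,\omega_S(-1))$) already suffices, rather than needing any finer exactness of the neighbouring maps.
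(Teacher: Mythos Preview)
Your proof is correct and follows essentially the same approach as the paper: both apply $\mathrm{Hom}(-,\omega_S(-1))$ to the conormal sequence to flank $\delta$ by $\mathrm{Hom}$ and $\mathrm{Ext}^1$ of $\Omega_{\mathbf P^N}|_S$, and then kill those flanking groups via the (restricted) Euler sequence and Serre duality. The only cosmetic difference is that you pass to $T_{\mathbf P^N}|_S\otimes\omega_S(-1)$ and compute sheaf cohomology directly, whereas the paper stays in the $\mathrm{Hom}/\mathrm{Ext}$ formulation; since $\Omega_{\mathbf P^N}|_S$ is locally free these are the same computation.
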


\begin{proof}
The map $\delta$ fits in the following exact sequence:
\begin{equation*}
\mathrm{Hom}(\Omega_{\mathbf P^N}|_S,\omega_S(-1)) \longrightarrow \mathrm{Hom}(\mathcal J/\mathcal J^2,\omega_S(-1)) \overset{\delta} \longrightarrow \mathrm{Ext}^1(\Omega_S,\omega_S(-1))  \longrightarrow \mathrm{Ext}^1(\Omega_{\mathbf P^N}|_S,\omega_S(-1)).
\end{equation*}
Using the restriction to $S$ of the Euler sequence we get
\begin{equation*}
 \mathrm{Hom}(\mathcal O_S^{N+1}(-1),\omega_S(-1)) \longrightarrow \mathrm{Hom}(\Omega_{\mathbf P^N}|_S,\omega_S(-1)) \longrightarrow \mathrm{Ext}^1(\mathcal O_S,\omega_S(-1)).
 \end{equation*}
 Thus, if $p_g(S)=h^{m-1}(\mathcal O_{S}(1))=0$
 then $\mathrm{Hom}(\Omega_{\mathbf P^N}|_S,\omega_S(-1))$ vanishes so $\delta$ is injective. On the other hand, from the Euler sequence we also obtain
 \begin{equation*}
\mathrm{Ext}^1(\mathcal O_S^{N+1}(-1),\omega_S(-1)) \to \mathrm{Ext}^1(\Omega_{\mathbf P^N}|_S,\omega_S(-1)) \to \mathrm{Ext}^2(\mathcal O_S,\omega_S(-1)).
\end{equation*}
Then, if $h^{m-1}(\mathcal O_{S})=h^{m-2}(\mathcal O_{S}(1))=0$, $\mathrm{Ext}^1(\Omega_{\mathbf P^N}|_S,\omega_S(-1))$ vanishes and $\delta$ is surjective.
\end{proof}

\noindent
The next Propositions~\ref{hom=0}, \ref{2.hom=0}, \ref{3.hom=0}, are technical results that are crucial for the proof of Theorem~\ref{def}. 
 
\begin{proposition}\label{hom=0}
Let $Y$ be a smooth projective bundle over $\mathbf P^1$ of dimension $m \geq 2$ embedded, as a non--degenerate variety, in $\mathbf P^N$ by a complete linear series and let $\mathcal I$ be its ideal sheaf. Then $\mathrm{Hom}(\mathcal I/\mathcal I^2,\omega_Y(-1))=0$.  
\end{proposition}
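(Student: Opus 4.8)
The plan is to reduce the vanishing of $\mathrm{Hom}(\mathcal I/\mathcal I^2,\omega_Y(-1))$ to a cohomology statement on the $\mathbf P^1$-bundle $Y$ and then settle that statement with the relative (co)tangent sequences. First note that, $Y$ being rational, $p_g(Y)=0$; moreover $h^{m-1}(\mathcal O_Y(1))=0$, which is automatic for $m\ge 3$ (push forward to $\mathbf P^1$: writing the embedding bundle as $\mathcal O_Y(1)=\mathcal O_Y(aT+bF)$ with $a\ge 1$ one has $R^j\pi_*\mathcal O_Y(aT+bF)=0$ for $j>0$, so $H^{m-1}(Y,\mathcal O_Y(1))=H^{m-1}(\mathbf P^1,(S^aE)(b))=0$ since $m-1\ge 2$) and, for $m=2$, follows from $b\ge ae+1$. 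Hence Lemma~\ref{isom.Hom.Ext}(1) applies when $m\ge 3$ (and for $m=2$ one uses the analogous surface statement, cf. \cite[Lemma 3.9]{MP}), showing that the connecting map $\delta$ is injective, so that $\mathrm{Hom}(\mathcal I/\mathcal I^2,\omega_Y(-1))$ embeds into $\mathrm{Ext}^1(\Omega_Y,\omega_Y(-1))$. Since $Y$ is smooth, $\mathrm{Ext}^1(\Omega_Y,\omega_Y(-1))=H^1(Y,T_Y\otimes\omega_Y(-1))$, which by Serre duality is dual to $H^{m-1}(Y,\Omega_Y(1))$. So it suffices to prove $H^{m-1}(Y,\Omega_Y\otimes\mathcal O_Y(1))=0$.

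To do this, let $\pi:Y=\mathbf P(E)\to\mathbf P^1$ and write $\mathcal O_Y(1)=\mathcal O_Y(aT+bF)$ with $a\ge 1$ and $b\ge ae_{m-1}+1$ (forced by very ampleness). I would use the two exact sequences
\[
0\to\pi^*\Omega_{\mathbf P^1}\to\Omega_Y\to\Omega_{Y/\mathbf P^1}\to 0,\qquad 0\to\Omega_{Y/\mathbf P^1}\to(\pi^*E)(-T)\to\mathcal O_Y\to 0,
\]
noting $\pi^*\Omega_{\mathbf P^1}=\mathcal O_Y(-2F)$ and $\pi^*E=\bigoplus_{j=0}^{m-1}\mathcal O_Y(-e_jF)$ with $e_0=0$. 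Twisting the first sequence by $\mathcal O_Y(aT+bF)$ reduces $H^{m-1}(\Omega_Y(1))=0$ to the vanishings $H^{m-1}(\mathcal O_Y(aT+(b-2)F))=0$ and $H^{m-1}(\Omega_{Y/\mathbf P^1}\otimes\mathcal O_Y(aT+bF))=0$; twisting the second sequence (when $m\ge 3$; when $m=2$, $\Omega_{Y/\mathbf P^1}$ is already the line bundle $\mathcal O_Y(-2T-e_1F)$ and one argues directly) reduces the remaining vanishing to $H^{m-1}((\pi^*E)((a-1)T+bF))=0$ and $H^{m-2}(\mathcal O_Y(aT+bF))=0$.

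Finally, all of these are handled uniformly. Since $a\ge 1$ and $a-1\ge 0$, the higher direct images under $\pi$ of $\mathcal O_Y(jT+cF)$ with $j\in\{a-1,a\}$ vanish, so each group in question equals some $H^k(\mathbf P^1,(S^jE)(c))$ with $k\in\{m-2,m-1\}$. For $m\ge 4$ these are zero because $\mathbf P^1$ is a curve; for $m=3$ the only one with $k=1$ is $H^1(\mathbf P^1,(S^aE)(b))$, and since $S^aE$ is a direct sum of copies of $\mathcal O_{\mathbf P^1}(-\ell)$ with $0\le\ell\le ae_{m-1}$ one has $b-\ell\ge b-ae_{m-1}\ge 1>-1$, so it vanishes; the case $m=2$ is similar, all relevant groups reducing to $H^1(\mathbf P^1,(S^aE)(b-2))$ and to $H^1$ of line bundles on $\mathbf P^1$ of degree $\ge -1$. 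I expect the main obstacle to be precisely this last piece of bookkeeping: choosing the intermediate sequences so that in every dimension — especially the boundary case $m=3$, where the inequality $b\ge ae_{m-1}+1$ is genuinely needed, and the case $m=2$, where one must stay in $H^1$ throughout to avoid the nonvanishing $H^0$ terms — every first-cohomology group arising on $\mathbf P^1$ really does vanish.
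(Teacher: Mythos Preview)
Your proposal is correct and follows essentially the same route as the paper. The only cosmetic difference is the order in which Serre duality is invoked: the paper works with the tangent sequence $0\to \mathcal T_{Y/\mathbf P^1}\to \mathcal T_Y\to p^*\mathcal T_{\mathbf P^1}\to 0$ and the relative Euler sequence, tensors by $\omega_Y(-1)$, and then Serre--dualizes each of the three resulting $H^1$/$H^2$ groups, whereas you dualize once at the outset (turning $H^1(T_Y\otimes\omega_Y(-1))$ into $H^{m-1}(\Omega_Y(1))$) and then run the cotangent and dual relative Euler sequences; the three vanishings you end up checking, namely $H^{m-1}(\mathcal O_Y(aT+(b-2)F))$, $H^{m-2}(\mathcal O_Y(aT+bF))$ and $H^{m-1}(p^*E\otimes\mathcal O_Y((a-1)T+bF))$, are literally the Serre duals of the paper's, and the push--forward argument to $\mathbf P^1$ is the same. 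For $m=2$ the paper simply cites \cite[Proposition~1.7]{Hirzebruch}, while you sketch the direct verification; your bookkeeping there is fine (in particular your observation that only the injectivity part of Lemma~\ref{isom.Hom.Ext} is needed is correct).
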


\begin{proof}
If $m=2$, the claim was proved in~\cite[Proposition 1.7]{Hirzebruch}. Now let $Y=\mathbf P(E)$ over $\mathbf P^1$, embedded, as a non--degenerate variety, in projective space $\mathbf P^N$ by a complete linear series and assume $m\geq 3$. Then $Y$ satisfies the hypothesis in (1) and (2) of Lemma~\ref{isom.Hom.Ext}, so it suffices to prove the vanishing of $\mathrm{Ext}^1(\Omega_{Y},\omega_{Y}(-1))=H^1(\mathcal T_Y \otimes \omega_Y(-1))$.

\noindent Having in account the sequence
  \begin{equation*}
 0 \longrightarrow \mathcal T_{Y/\mathbf P^1} \longrightarrow \mathcal T_Y
\longrightarrow  p^*\mathcal T_{\mathbf
P^1} \longrightarrow 0
\end{equation*}
and the dual of the relative Euler sequence
\begin{equation*}
 0 \longrightarrow \mathcal O_{\mathbf P(E)} \longrightarrow p^*E^* \otimes
\mathcal O_{\mathbf P(E)}(1) \longrightarrow  \mathcal T_{Y/\mathbf P^1}
\longrightarrow 0, 
\end{equation*}
we only need to check the vanishings of $H^1(p^*\mathcal T_{\mathbf P^1} \otimes \omega_Y(-1))$, $H^2(\omega_Y(-1))$ and $ H^1(p^*E^* \otimes
\mathcal O_{\mathbf P(E)}(1) \otimes \omega_Y(-1))$. By Serre duality this is equivalent to checking the vanishings of $H^{m-1}(p^*\omega_{\mathbf P^1} \otimes \mathcal O_Y(1))$, $H^{m-2}(\mathcal O_Y(1))$ and $H^{m-1}(p^*E \otimes \mathcal O_{\mathbf P(E)}(-1) \otimes \mathcal O_Y(1))$. 

\smallskip

\noindent
We prove that in the three cases the
cohomology groups on $Y$ are isomorphic to the cohomology groups of the
push--down of the sheaves to $\mathbf P^1$. For this it suffices to see that
the higher cohomology of the restriction of the three sheaves on $Y$ to a fiber
$F$ (which is isomorphic to $\mathbf P^{m-1}$) vanishes. In the first two
cases  
this follows because we are restricting a line bundle, so the intermediate
cohomology would vanish, and since $\mathcal O_Y(1)$ is very ample, so would the topmost
cohomology. In the third case the intermediate cohomology vanishes by the same
reason as before and the topmost cohomology vanishes also because the
restriction of $\mathcal O_{\mathbf P(E)}(-1) \otimes \mathcal O_Y(1)$ to $F$ has
non negative degree. Finally, we have to compute cohomology groups on $\mathbf P^1$.
Except for $H^{m-2}(p_*\mathcal O_Y(1))$ when $m=3$, these cohomology groups
vanish by dimension reasons. If $m=3$, then $H^1(p_*\mathcal O_Y(1))=0$, for
$p_*\mathcal O_Y(1)$ is a direct sum of line bundles of positive degree over
$\mathbf P^1$. 
\end{proof}

\noindent
The next two results are the analogues of the above result for projective space and the smooth quadric, respectively.

\begin{proposition}\label{2.hom=0}
Let $Y=\mathbb P^m$, with $m \geq 2$, embedded, as a non--degenerate variety, in $\mathbf P^N$ by the complete linear series of $\mathcal O_{\mathbb P^m}(d)$, for which $d \geq 1$, and let $\mathcal I$ be its ideal sheaf. Then $\mathrm{Hom}(\mathcal I/\mathcal I^2,\omega_Y(-1))=0$.
\end{proposition}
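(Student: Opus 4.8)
The plan is to follow exactly the template of Proposition~\ref{hom=0}, now with $Y=\mathbb P^m$. First I would invoke Lemma~\ref{isom.Hom.Ext}: since $Y=\mathbb P^m$ has $p_g(Y)=h^m(\mathcal O_Y)=0$ and, for $d\geq 1$, the embedding line bundle $\mathcal O_Y(1)=\mathcal O_{\mathbb P^m}(d)$ satisfies $h^{m-1}(\mathcal O_Y(1))=h^{m-1}(\mathcal O_{\mathbb P^m}(d))=0$ and $h^{m-2}(\mathcal O_Y(1))=h^{m-2}(\mathcal O_{\mathbb P^m}(d))=0$ (all intermediate cohomology of line bundles on $\mathbb P^m$ vanishes), together with $h^{m-1}(\mathcal O_Y)=0$, the hypotheses of both parts of the lemma hold. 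Hence $\delta$ is an isomorphism, and it suffices to prove the vanishing of $\mathrm{Ext}^1(\Omega_Y,\omega_Y(-1))=H^1(\mathcal T_Y\otimes\omega_Y(-1))$.

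Next I would compute $H^1(\mathcal T_{\mathbb P^m}\otimes\omega_Y(-1))$ directly. Here $\omega_Y=\mathcal O_{\mathbb P^m}(-m-1)$ and $\mathcal O_Y(-1)=\mathcal O_{\mathbb P^m}(-d)$, so $\omega_Y(-1)=\mathcal O_{\mathbb P^m}(-m-1-d)$; thus I want $H^1(\mathcal T_{\mathbb P^m}(-m-1-d))=0$. The cleanest route is the Euler sequence
\begin{equation*}
0\longrightarrow\mathcal O_{\mathbb P^m}\longrightarrow\mathcal O_{\mathbb P^m}(1)^{\oplus(m+1)}\longrightarrow\mathcal T_{\mathbb P^m}\longrightarrow 0,
\end{equation*}
twisted by $\mathcal O_{\mathbb P^m}(-m-1-d)$. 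Taking the long exact sequence in cohomology, $H^1(\mathcal T_{\mathbb P^m}(-m-1-d))$ sits between $H^1(\mathcal O_{\mathbb P^m}(-m-d)^{\oplus(m+1)})$ and $H^2(\mathcal O_{\mathbb P^m}(-m-1-d))$. For $m\geq 3$ both of these vanish: $H^1$ of any line bundle on $\mathbb P^m$ with $m\geq 2$ is zero, and $H^2$ of a line bundle on $\mathbb P^m$ with $m\geq 3$ is zero. For $m=2$ one has $\mathcal T_{\mathbb P^2}\cong\Omega_{\mathbb P^2}(3)$, so $H^1(\mathcal T_{\mathbb P^2}(-3-d))=H^1(\Omega_{\mathbb P^2}(-d))$, which vanishes for $d\geq 1$ by Bott's formula (equivalently, $H^1(\Omega_{\mathbb P^2}(k))=0$ for $k\neq 0$); alternatively the $m=2$ case is already covered by \cite[Proposition 1.7]{Hirzebruch} since $\mathbb P^2$ is a Hirzebruch surface. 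In either case $H^1(\mathcal T_Y\otimes\omega_Y(-1))=0$, which yields the claim.

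There is essentially no serious obstacle here: everything reduces to standard vanishing of line bundle and (co)tangent bundle cohomology on projective space, and the only point requiring minor care is the separate bookkeeping for $m=2$ versus $m\geq 3$ in the last step, exactly as in the proof of Proposition~\ref{hom=0}. I would present the $m\geq 3$ case via the twisted Euler sequence and dispatch $m=2$ by citing \cite[Proposition 1.7]{Hirzebruch} (or Bott's formula), so the write-up stays short.
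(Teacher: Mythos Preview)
Your proposal is correct and follows essentially the same approach as the paper: cite \cite[Proposition 1.7]{Hirzebruch} for $m=2$, reduce via Lemma~\ref{isom.Hom.Ext} to the vanishing of $H^1(\mathcal T_{\mathbb P^m}\otimes\omega_Y(-1))$ for $m\geq 3$, and prove that vanishing via the Euler sequence. One small point of bookkeeping: Lemma~\ref{isom.Hom.Ext} is stated only for $m\geq 3$ (and indeed for $m=2$ the hypothesis $h^{m-2}(\mathcal O_Y(1))=h^0(\mathcal O_{\mathbb P^2}(d))=0$ fails), so the case split should occur at the very start, as the paper does, rather than only in the $H^1$ computation.
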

\begin{proof}
If $m=2$, the claim was proved in~\cite[Proposition 1.7]{Hirzebruch}. Now assume $m\geq 3$. Then $Y=\mathbb P^m$ satisfies the hypothesis in (1) and (2) of Lemma~\ref{isom.Hom.Ext}, so it suffices to prove the vanishing of $\mathrm{Ext}^1(\Omega_{Y},\omega_{Y}(-1))=H^1(\mathcal T_Y \otimes \omega_Y(-1))$.
The proof of $H^1(\mathcal T_Y \otimes \omega_Y(-1))=0$ is straightforward using the Euler sequence in $\mathbb P^m$. 
\end{proof}

\begin{proposition}\label{3.hom=0}
Let $Y=\mathbb Q_m$, with $m \geq 2$, embedded, as a non--degenerate variety, in $\mathbf P^N$ by the complete linear series of $\mathcal O_{\mathbb Q_m}(d)$, for which $d \geq 1$, when $m \geq 3$, or by the complete linear series of $\mathcal O_{\mathbb Q_2}(a, b)$, for which $a \geq 1, b \geq 1$, when $m=2$, and let $\mathcal I$ be its ideal sheaf. Then $\mathrm{Hom}(\mathcal I/\mathcal I^2,\omega_Y(-1))=0$.
\end{proposition}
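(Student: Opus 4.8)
The plan is to follow the same strategy used in the proofs of Propositions~\ref{hom=0} and~\ref{2.hom=0}. First I would dispose of the case $m=2$ by citing \cite[Proposition 1.7]{Hirzebruch}, which handles the Hirzebruch surface and, one expects, the quadric $\mathbb Q_2 = \mathbb P^1 \times \mathbb P^1$ as well (or else reduce $\mathbb Q_2$ to that reference directly, since $\mathbb Q_2 \cong \mathbb F_0$). So assume $m \geq 3$. Then $Y = \mathbb Q_m$ embedded by $|\mathcal O_{\mathbb Q_m}(d)|$ is a smooth variety of dimension $m \geq 3$, it has $p_g(Y) = 0$ (since $h^{m,0}$ of a quadric vanishes), and one checks that $h^{m-1}(\mathcal O_Y(1)) = h^{m-1}(\mathcal O_{\mathbb Q_m}(d)) = 0$, $h^{m-1}(\mathcal O_Y) = 0$ and $h^{m-2}(\mathcal O_Y(1)) = h^{m-2}(\mathcal O_{\mathbb Q_m}(d)) = 0$. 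These intermediate-cohomology vanishings for line bundles on a smooth quadric hypersurface of dimension $\geq 3$ are standard (for $m \geq 3$ the quadric has only cohomology in degrees $0$ and $m$ for line bundles, apart from the middle-dimensional subtlety in the even case which does not affect the degrees $m-1$, $m-2$ we need). Hence both hypotheses (1) and (2) of Lemma~\ref{isom.Hom.Ext} hold, so $\delta$ is an isomorphism and it suffices to prove the vanishing of $\mathrm{Ext}^1(\Omega_Y, \omega_Y(-1)) = H^1(\mathcal T_Y \otimes \omega_Y(-1))$.

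Next I would compute $H^1(\mathcal T_Y \otimes \omega_Y(-1))$ using the normal bundle sequence for $Y = \mathbb Q_m \hookrightarrow \mathbb P^{m+1}$,
\begin{equation*}
0 \longrightarrow \mathcal T_Y \longrightarrow \mathcal T_{\mathbb P^{m+1}}|_Y \longrightarrow \mathcal N_{Y/\mathbb P^{m+1}} \longrightarrow 0,
\end{equation*}
with $\mathcal N_{Y/\mathbb P^{m+1}} = \mathcal O_{\mathbb Q_m}(2)$, together with the restricted Euler sequence
\begin{equation*}
0 \longrightarrow \mathcal O_Y \longrightarrow \mathcal O_Y(H)^{\oplus (m+2)} \longrightarrow \mathcal T_{\mathbb P^{m+1}}|_Y \longrightarrow 0,
\end{equation*}
where $H$ is the hyperplane class of $\mathbb P^{m+1}$ restricted to $Y$. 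Tensoring both sequences by $\omega_Y(-1) = \mathcal O_{\mathbb Q_m}((-m+d)H - dH)\cdots$ — more precisely $\omega_Y = \mathcal O_{\mathbb Q_m}(-m H)$ and $\mathcal O_Y(1) = \mathcal O_{\mathbb Q_m}(dH)$, so $\omega_Y(-1) = \mathcal O_{\mathbb Q_m}((-m-d)H)$ — reduces the problem to the vanishing of $H^1(\mathcal O_{\mathbb Q_m}((-m-d+1)H)^{\oplus(m+2)})$, $H^2(\mathcal O_{\mathbb Q_m}((-m-d)H))$ and $H^1(\mathcal O_{\mathbb Q_m}((-m-d+2)H))$. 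Since $m \geq 3$ and $d \geq 1$, all three twists are by line bundles $\mathcal O_{\mathbb Q_m}(kH)$ with $k \leq -2$, and for such negative line bundles on the quadric the cohomology vanishes in all degrees $i$ with $1 \leq i \leq m-1$ (again the only nonzero intermediate cohomology would be in the middle degree $m/2$ for the even quadric, which equals $1$ or $2$ only when $m \leq 4$; I will check that even in the borderline cases $m=3,4$ the relevant groups vanish because the degrees of the twists are too negative, being at most $-2$). This gives $H^1(\mathcal T_Y \otimes \omega_Y(-1)) = 0$.

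The one point that requires care — and the place I expect the only real friction — is the even-dimensional quadric, where $H^{m/2}(\mathcal O_{\mathbb Q_m})$ or rather $\mathrm{Pic}$ and the spinor/cohomology structure is more delicate than the "line bundle with no intermediate cohomology" heuristic suggests. Concretely I need to make sure that in the checks for Lemma~\ref{isom.Hom.Ext}'s hypotheses and for the three vanishings above, none of the relevant degrees $m-1$, $m-2$, $1$, $2$ coincides with $m/2$ for a line bundle whose cohomology there is actually nonzero; since the twists appearing are all sufficiently negative (degree $\leq -2$) or sufficiently positive away from the range, the middle cohomology of $\mathcal O_{\mathbb Q_m}(kH)$ vanishes for $k \neq 0$, so the argument goes through. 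Alternatively, for $m \geq 3$ one can invoke directly that $\mathcal O_{\mathbb Q_m}(kH)$ has cohomology concentrated in degrees $0$ and $m$ for every $k$ (true for the line bundles $\mathcal O(kH)$, $k \in \mathbb Z$, on any smooth quadric of dimension $\geq 3$), which makes all the needed vanishings immediate. Either way, the proof concludes by combining these vanishings with Lemma~\ref{isom.Hom.Ext}.
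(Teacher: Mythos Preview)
Your argument is correct and matches the paper's: for $m\geq 3$ you reduce via Lemma~\ref{isom.Hom.Ext} to $H^1(\mathcal T_Y\otimes\omega_Y(-1))=0$ and verify this with the restricted Euler sequence and the normal sequence of $\mathbb Q_m\subset\mathbb P^{m+1}$, exactly as the paper does. Two minor remarks: the term coming from the normal sequence that must vanish is $H^0(\mathcal O_{\mathbb Q_m}(2-m-d))$, not $H^1$ (both are zero, so nothing changes), and your worry about middle cohomology is unnecessary, since every line bundle on a smooth hypersurface of dimension $m$ has vanishing cohomology in degrees $1,\dots,m-1$. For $m=2$ your shortcut via $\mathbb Q_2\cong\mathbb F_0$ and \cite[Proposition~1.7]{Hirzebruch} is legitimate; the paper instead argues directly on $\mathbb Q_2$, invoking \cite[Lemma~3.9]{MP} and then using the Euler sequence of $\mathbb P^3$ restricted to $\mathbb Q_2$ together with the conormal sequence to obtain $\mathrm{Ext}^1(\Omega_{\mathbb Q_2},\omega_Y(-1))=0$.
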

\begin{proof}
Let $Y=\mathbb Q_m$, embedded in projective space $\mathbf P^N$ by a complete linear series and assume $m \geq 3$. Then $Y$ satisfies the hypothesis in (1) and (2) of Lemma~\ref{isom.Hom.Ext}, so it suffices to prove the vanishing of $\mathrm{Ext}^1(\Omega_{Y},\omega_{Y}(-1))=H^1(\mathcal T_Y \otimes \omega_Y(-1))$. 
The proof of $H^1(\mathcal T_Y \otimes \omega_Y(-1))=0$ is also straightforward using the Euler sequence in $\mathbb P^{m+1}$ restricted to $\mathbb Q_m$ and the normal sequence of $\mathbb Q_m$ in $\mathbb P^{m+1}$.

\smallskip
\noindent
Now we prove the proposition for $Y=\mathbf Q_2$ embedded by $\mathcal
O_{Y}(1)=\mathcal O_{\mathbf Q_2}(a, b)$. In this occasion $p_g(Y)=0$,
$h^1(\mathcal O_Y(1))=0$,
$q(Y)=0$
and $Y$ is embedded by a complete series, so
by \cite[Lemma 3.9]{MP}
it suffices to see that $\mathrm{Ext}^1 (\Omega_Y, \omega_Y(-1))=0$.
For this we use the Euler sequence of $\mathbf P^3$ restricted to $\mathbb Q_2$
\begin{equation}\label{euler.P2}
0 \longrightarrow {\Omega_{\mathbf P^3}}_{\mid \mathbb Q_2} \longrightarrow H^0(\mathcal O_{\mathbf P^3}(1)) \otimes  \mathcal O_{\mathbf Q_2}(-1, -1) \longrightarrow \mathcal O_{\mathbf Q_2} \longrightarrow 0.
\end{equation}
To the sequence~\eqref{euler.P2} we apply the functor  $\mathrm{Hom}(-,\omega_Y(-1))$ to obtain the exact sequence
\begin{equation*}
0 \longrightarrow \mathrm{Ext}^1({\Omega_{\mathbf P^3}}_{\mid \mathbb Q_2},\omega_Y(-1))
\longrightarrow \mathrm{Ext}^2(\mathcal O_{\mathbf Q_2}, \omega_Y(-1))
\longrightarrow  \mathrm{Ext}^2(H^0(\mathcal O_{\mathbf P^3}(1))  \otimes
\mathcal O_{\mathbf Q_2}(-1, -1), \omega_Y(-1)).
\end{equation*}
Dualizing we get
\begin{equation*}
H^0(\mathcal O_{\mathbf P^3}(1)) \otimes  H^0(\mathcal O_{\mathbf Q_2}(a-1, b-1))  \overset{\alpha} \longrightarrow H^0(\mathcal O_{\mathbf Q_2}(a, b)) \longrightarrow \mathrm{Ext}^1({\Omega_{\mathbf P^3}}_{\mid \mathbb Q_2}, \omega_Y(-1))^{\vee} \longrightarrow 0.
\end{equation*}
Now the multiplication map $\alpha$ is surjective if $a, b \geq 1$, so
\begin{equation}\label{ext.van.P2}
 \mathrm{Ext}^1({\Omega_{\mathbf P^3}}_{\mid \mathbb Q_2}, \omega_Y(-1))=0.
\end{equation}
Now we we apply $\mathrm{Hom}(-,\omega_Y(-1))$ to the conormal sequence of $Y=\mathbb Q_2$
\begin{equation*}\label{conormal.Q2}
0 \longrightarrow \mathcal O_{\mathbf Q_2}(-2, -2) \longrightarrow {\Omega_{\mathbf P^3}}_{\mid \mathbb Q_2} \longrightarrow \Omega_{\mathbb Q_2}\longrightarrow 0.
\end{equation*}
to obtain the exact sequence
\begin{equation*}
H^0(\mathcal O_{\mathbf Q_2}(-a, -b))  \longrightarrow \mathrm{Ext}^1(\Omega_Y, \omega_Y(-1)) \longrightarrow \mathrm{Ext}^1({\Omega_{\mathbf P^3}}_{\mid \mathbb Q_2},\omega_Y(-1)).
\end{equation*}
Since $a, b\geq 1$ we have $H^0(\mathcal O_{\mathbf Q_2}(-a, -b))=0$ so, with \eqref{ext.van.P2}, we obtain $\mathrm{Ext}^1(\Omega_Y, \omega_Y(-1))=0$.
\end{proof}

\noindent
Now we are ready to prove the central result of this paper. 

\begin{theorem}\label{def}
Let $X, \varphi$ and $Y$ be as in the set--up. Then any general deformation of $\varphi$ is a canonical map and a finite morphism of degree $2$ onto its image, which is a
deformation of $Y$.
\end{theorem}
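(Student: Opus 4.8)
The plan is to realize $\varphi$ as a point in the Hilbert scheme of maps (or, more concretely, to study deformations of the morphism $\varphi\colon X\to \mathbf P^N$ together with its image) and to show that the locus of degree $2$ canonical morphisms onto a deformation of $Y$ is open and closed — in fact the whole component containing $[\varphi]$. Concretely, I would first record that since $\varphi$ is the canonical morphism and $p_g(Y)=0$, the pullback $\varphi^*\mathcal O_{\mathbf P^N}(1)=\omega_X$ and $H^0(\omega_X)=H^0(\mathcal O_Y(1))$; hence any deformation of $\varphi$ is again given by (a subseries of) the canonical series, and by semicontinuity of $h^0(\omega_X)$ under the deformation it is again the \emph{complete} canonical series. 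So the question reduces to: does the degree $2$ morphism $\varphi$ deform to a degree $2$ morphism, or can it deform to a birational (degree $1$) morphism onto a variety of twice the degree? This is exactly the dichotomy discussed in the Introduction, and it is governed by whether an embedded multiplicity $2$ rope (carpet) on a deformation of $Y$ with the invariants of $X$ exists.

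The heart of the argument is the infinitesimal computation already set up in Lemma~\ref{isom.Hom.Ext} and Propositions~\ref{hom=0}, \ref{2.hom=0}, \ref{3.hom=0}. The space $\mathrm{Hom}(\mathcal I/\mathcal I^2,\omega_Y(-1))$ parametrizes the first-order embedded rope structures on $Y$ with conormal bundle $\omega_Y(-1)$ (equivalently, with the numerics of $X$, since $\mathcal E=\omega_Y(-1)$ by Remark~\ref{Seshadri.remark}); its vanishing says there is \emph{no} nontrivial first-order deformation of $\varphi$ that keeps the image $Y$ fixed but turns $\varphi$ into a (limit of) degree $1$ map — any such would produce a rope. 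Thus I would argue as follows. Take a general deformation $\varphi_t$ of $\varphi$ over a disc; by the first paragraph it is again the canonical morphism of $X_t$, a smooth variety of general type. Its image $Y_t$ is a deformation of $Y$, hence (using that $\mathbf P(E)$ over $\mathbf P^1$, $\mathbf P^m$, $\mathbf Q_m$ are rigid, or at worst deform within the same very restricted families of varieties of minimal degree / rational varieties, and that very ampleness and $p_g=0$, $h^{m-1}(\mathcal O(1))=0$, etc. are open) still one of the varieties in our set--up. If $\varphi_t$ had degree $1$ onto $Y_t$, then since $\deg \varphi_t\cdot\deg Y_t$ is the degree of the image of $X_t$ under $\omega_{X_t}$ and this is constant in the family, $Y_t$ would have degree $2\deg Y$; but then the flat limit of the images $\varphi_t(X_t)$ as $t\to 0$ would be a subscheme of $\mathbf P^N$ supported on $Y$ of degree $2\deg Y$ and arithmetic genus equal to $p_g(X)-1$ — that is, precisely an embedded multiplicity $2$ rope on $Y$ with conormal bundle $\omega_Y(-1)$ — contradicting $\mathrm{Hom}(\mathcal I/\mathcal I^2,\omega_Y(-1))=0$. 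Hence $\deg\varphi_t=2$, and $\varphi_t$ is a finite (being the canonical morphism of a minimal variety of general type, $\omega_{X_t}$ ample and bpf) degree $2$ canonical morphism onto $Y_t$, a deformation of $Y$, which is what we want.

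A few points need care and I would spell them out. (i) The reduction ``limit of embeddings $\Rightarrow$ embedded rope appears'' should be made precise: one considers the family of images inside $\mathbf P^N$ (or rather the Hilbert scheme point of $\varphi_t(X_t)$, with its scheme structure), takes the flat limit, checks it is generically reduced along $Y$ with multiplicity $2$ hence a rope, and reads off that its conormal sheaf must be $\omega_Y(-1)$ from the constancy of $\chi$/$p_g$; this is the standard "ribbon/carpet as limit" argument (as in \cite{Fong}, \cite{GPcarpets}, \cite{GGP2}, \cite{Hirzebruch}) and I would cite it and then invoke $\mathrm{Hom}(\mathcal I/\mathcal I^2,\omega_Y(-1))=0$ to exclude it. (ii) One must know that $Y_t$ stays in the allowed list: for $\mathbf P^m$ and $\mathbf Q_m$ this is immediate ($\mathbf P^m$ is rigid; a smooth quadric deforms to a smooth quadric), and for $Y=\mathbf P(E)$ over $\mathbf P^1$ one uses that small deformations of such a projective bundle over $\mathbf P^1$ are again of the same type (possibly with a different $E$), and that the polarization class deforms keeping $a,b$ fixed, so the hypotheses of Proposition~\ref{hom=0} persist. (iii) Finally, that a general deformation of $\varphi$ is a \emph{morphism} (no base points appear) follows from $\omega_{X_t}$ being base-point-free by openness, so $\varphi_t$ is a genuine morphism, and finiteness follows since $\omega_{X_t}$ is ample. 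I expect the main obstacle to be point (i): making the passage from "degree $1$ in the limit" to "an honest embedded multiplicity $2$ rope with exactly conormal bundle $\omega_Y(-1)$" fully rigorous — controlling the scheme structure of the flat limit and identifying its conormal sheaf — rather than any of the cohomological vanishings, which are already done in the Propositions above.
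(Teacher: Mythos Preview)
Your approach is genuinely different from the paper's and, while conceptually natural, has a real gap precisely where you locate it.

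The paper does not argue by contradiction through carpets. Instead, from the proofs of Propositions~\ref{hom=0}--\ref{3.hom=0} it extracts the vanishing $H^1(\mathcal T_Y\otimes\omega_Y(-1))=0$ and feeds this directly into Wehler's deformation theory of cyclic covers (\cite[Corollary 1.11]{Wehler}, or \cite[Theorem 8.1]{Hor3}): that vanishing guarantees that every deformation $\mathfrak X$ of $X$ over a smooth curve $T$ already carries the structure of a double cover $\Pi\colon\mathfrak X\to\mathfrak Y$ of some deformation $\mathfrak Y$ of $Y$. A further easy vanishing $H^1(\mathcal T_{\mathbf P^N}|_Y)=0$ shows that $\mathfrak Y$ embeds relatively in $\mathbf P^N_T$ via some $\mathfrak i$, and \cite[Lemma 2.4]{MP} identifies $\mathfrak i\circ\Pi$ with the relative canonical morphism, hence with the given deformation $\Phi$ of $\varphi$. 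The factorization $\varphi_t=i_t\circ\pi_t$ is thus produced \emph{a priori}, with no need to analyse the scheme-theoretic image of $\varphi_t$ or to take flat limits.

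Your route tries to recover the factorization \emph{a posteriori} from the image of $\varphi_t$, and the step you flag in (i)---that the flat limit of $\varphi_t(X_t)$ is an honest embedded carpet on $Y$ with conormal bundle exactly $\omega_Y(-1)$---is the crux and is not a formality. The references you cite (\cite{Fong}, \cite{GPcarpets}, \cite{GGP2}, \cite{Hirzebruch}) establish this kind of statement only for curves and surfaces in rather specific situations; for $m\geq 3$ it is not available off the shelf, and controlling the scheme structure of such a flat limit in higher dimension is delicate. There is also a smaller gap in your case ``$\deg\varphi_t=2$'': you assert that $Y_t$ is then a deformation of $Y$ lying in the allowed list, but this too requires justification (one must show the flat family of images has special fibre exactly $Y$, e.g.\ by matching Hilbert polynomials, and then invoke openness of smoothness), rather than just quoting rigidity of $Y$ as an abstract variety. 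The paper's use of Wehler sidesteps both issues entirely; the non-existence of carpets (Theorem~\ref{nonexist.canonical.structures.cor}) is proved \emph{separately} in Section~2 as a companion result, not as an input to Theorem~\ref{def}.
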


\begin{proof}
We are assuming $\varphi : X \to \mathbf P^N$ is the canonical map of $X$ which factors $\varphi =i \circ \pi$, for which $\pi: X \to Y$ is a double cover onto $Y$, which fits in one of the three cases in our set--up  and the embedding $i: Y \hookrightarrow \mathbf P^N$ is non--degenerate and given by a complete linear series as in the set--up.
By Propositions~\ref{hom=0}, \ref{2.hom=0}, \ref{3.hom=0}, we know that $H^1(\mathcal T_Y \otimes \omega_Y(-1))=0$.

\noindent
Let $(\mathfrak X,\Phi)$ be a deformation of $(X,\varphi)$ over a smooth curve $T$ and let
$(X',\varphi')$ be a general member of $(\mathfrak X,\Phi)$.

\noindent
Since $H^1(\mathcal T_Y \otimes \mathcal  \omega_Y(-1))=0$, then \cite[Corollary 1.11]{Wehler}  (or \cite[Theorem 8.1]{Hor3}) tells us that
$\mathfrak X$ can be realized as a covering $\Pi : \mathfrak X \to \mathfrak Y $ (which, after shrinking $T$, is finite, surjective and of degree $2$) of a deformation $\mathfrak Y$ of $Y$.
Now, since $H^1(\mathcal T_{\mathbf P^N}|_Y)=0$ (as can be easily checked in all cases of our set--up), any deformation $\mathfrak Y$ of $Y$ can be realized as a deformation $(\mathfrak Y, \mathfrak i)$ of $(Y,i)$ (see \cite[Theorem 8.1]{Hor3}). Since $i$ is an embedding, we can assume, after shrinking $T$, that $\mathfrak i$ is a relative embedding of $\mathfrak Y$ in $\mathbf P^N_T$.
Then $\mathfrak i \circ \Pi=\Phi'$ is a deformation of
$\varphi$. Now we saw in \cite[Lemma 2.4]{MP} that the only deformation of $\varphi$
is the (relative) canonical morphism, so $\varphi'$ is the canonical morphism
of $X'$ and, after shrinking $T$, we have $\Phi=\mathfrak i \circ \Pi$. 
\end{proof}

\section{Non--existence of higher dimensional carpets}
In this section we link two themes: on the one hand, the deformation theory of double covers $\varphi$ of a smooth variety $Y$, embedded in the projective space $\mathbf P^N$ and, on the other hand, the existence or non--existence of carpets supported on $Y$, also embedded in $\mathbf P^N$. We will focus on the case of canonical double covers (see Remark~\ref{Seshadri.remark}) and canonically embedded carpets (see Definition~\ref{def.canonical.carpet}) and restrict our attention to when $Y$ is either a smooth projective bundle $Y=\mathbb P(E)$ (of dimension $m \geq 2$) over $\mathbf P^1$ or $\mathbf P^m$ or a hyper--quadric $\mathbf Q_m$ or $\mathbb Q_2$ embedded by a complete linear series in $\mathbf P^N$, as in the set--up. The study of both themes will be addressed by looking at
the same cohomology group on $Y$, namely $H^0(\mathcal N_{Y,\mathbf
P^N} \otimes \mathcal \omega_Y(-1))$. To start seeing the reason for this relation between double covers and carpets, consider a morphism $\varphi$ from a smooth irreducible variety $X$ to $\mathbf P^N$ such that $\varphi$ factors as $\varphi = i \circ \pi$, where $\pi$ is a
finite, double cover of $Y$ and $i$ embedds $Y$ in $\mathbf P^N$. The morphism
$\pi$ is flat and its trace--zero module $\mathcal E$ is a line bundle. We then focus on
the group $H^0(\mathcal N_\varphi)$, which parameterizes first order
infinitesimal deformations of $\varphi$, and on the group $H^0(\mathcal
N_{i(Y),\mathbf P^N} \otimes \mathcal E)$ which, according to
\cite[Proposition 2.1]{Gon}, parameterizes pairs $(\widetilde Y, \widetilde
i)$, where $\widetilde Y$ is a rope on $Y$ with conormal bundle $\mathcal E$
and $\widetilde i$ is a morphism from $Y$ to $\mathbf P^N$ extending $i$.
The relation between these two groups, which is the relation that links the
deformation theory of $\varphi$ with the existence or non--existence of carpets, is
given by the following result, which holds in wider  generality (it holds for
$X$ and $Y$ smooth irreducible projective varieties of arbitrary dimension and $\pi$ finite morphism of any degree $n \geq 2$ with trace--zero module $\mathcal E$ of rank $n-1$):

\begin{proposition}\label{morphism.miguel} {\rm (\cite[Proposition 3.7]{Gon}).}
Let $X$ be a smooth irreducible variety and let $\varphi$ be
a morphism  from $X$ to $\mathbf P^N$
that factors as $\varphi = i \circ \pi$, where $\pi$ is a
finite cover of a smooth variety $Y$ and $i$ embedds $Y$ in $\mathbf P^N$. Let
$\mathcal E$ be
the trace--zero module of $\pi$ and let $\mathcal I$ be the ideal sheaf of
$i(Y)$ in $\mathbf P^N$.
There exists a homomorphism
\begin{equation*}
 H^0(\mathcal N_\varphi) \overset{\Psi}\longrightarrow
\mathrm{Hom}(\pi^*(\mathcal I/\mathcal I^2), \mathcal O_X),
\end{equation*}
that appears when taking cohomology on the commutative
diagram~\cite[(3.3.2)]{Gon}. Since
\begin{equation*}
\mathrm{Hom}(\pi^*(\mathcal I/\mathcal I^2), \mathcal O_X)=\mathrm{Hom}(\mathcal
I/\mathcal I^2, \pi_*\mathcal O_X)=\mathrm{Hom}(\mathcal I/\mathcal I^2,
\mathcal O_Y) \oplus \mathrm{Hom}(\mathcal I/\mathcal I^2, \mathcal E)
\end{equation*}
the homomorphism $\Psi$ has two components
\begin{eqnarray*}
H^0(\mathcal N_\varphi) & \overset{\Psi_1}  \longrightarrow &
\mathrm{Hom}(\mathcal I/\mathcal I^2, \mathcal O_Y) \cr
H^0(\mathcal N_\varphi) & \overset{\Psi_2}  \longrightarrow &
\mathrm{Hom}(\mathcal I/\mathcal I^2, \mathcal E).
\end{eqnarray*}
\end{proposition}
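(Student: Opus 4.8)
The plan is to produce $\Psi$ as the map on global sections induced by a natural morphism of sheaves $\mathcal N_\varphi \to \pi^*\mathcal N_{Y,\mathbf P^N}$, and then to read off the two components $\Psi_1,\Psi_2$ from the canonical splitting of $\pi_*\mathcal O_X$. The first thing I would record is the identification of the target. Since $i$ is a closed embedding of smooth varieties, $\mathcal N_{Y,\mathbf P^N}=\mathcal{H}om(\mathcal I/\mathcal I^2,\mathcal O_Y)$ with $\mathcal I/\mathcal I^2$ locally free, so pulling back along the finite flat morphism $\pi$ gives $\pi^*\mathcal N_{Y,\mathbf P^N}=\mathcal{H}om(\pi^*(\mathcal I/\mathcal I^2),\mathcal O_X)$, whence $H^0(X,\pi^*\mathcal N_{Y,\mathbf P^N})=\mathrm{Hom}(\pi^*(\mathcal I/\mathcal I^2),\mathcal O_X)$, which is exactly the target in the statement. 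Thus it suffices to build the sheaf map $\mathcal N_\varphi \to \pi^*\mathcal N_{Y,\mathbf P^N}$ and pass to cohomology; this passage to cohomology is what the statement refers to as ``taking cohomology on the commutative diagram $(3.3.2)$''.

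To build the sheaf map I would use the chain rule for the composite $\varphi=i\circ\pi$. Because $\pi$ is a finite cover of a smooth variety in characteristic zero, $\pi$ and hence $\varphi$ are generically unramified, so the codifferential dualizes to an injection $\mathcal T_X\hookrightarrow \varphi^*\mathcal T_{\mathbf P^N}$ of a torsion--free sheaf that is generically an isomorphism onto its image, and $\mathcal N_\varphi$ is its cokernel, fitting into $0\to \mathcal T_X\to \varphi^*\mathcal T_{\mathbf P^N}\to \mathcal N_\varphi\to 0$, whose $H^0$ computes the first order deformations of $\varphi$. On the other hand, pulling back the normal bundle sequence of $Y\subset \mathbf P^N$ along the exact functor $\pi^*$ gives $0\to \pi^*\mathcal T_Y\to \varphi^*\mathcal T_{\mathbf P^N}\to \pi^*\mathcal N_{Y,\mathbf P^N}\to 0$. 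The chain rule $d\varphi=d\pi\circ \pi^*(di)$, dualized, says precisely that $\mathcal T_X\to \varphi^*\mathcal T_{\mathbf P^N}$ factors through $\pi^*\mathcal T_Y\to \varphi^*\mathcal T_{\mathbf P^N}$ via the natural map $\mathcal T_X\to \pi^*\mathcal T_Y$ coming from the cotangent sequence of $\pi$. Consequently the image of $\mathcal T_X$ is contained in the image of $\pi^*\mathcal T_Y$ inside $\varphi^*\mathcal T_{\mathbf P^N}$, the two short exact sequences assemble into the commutative diagram of the cited reference, and quotienting the common term $\varphi^*\mathcal T_{\mathbf P^N}$ by these nested images induces the desired surjection on cokernels $\mathcal N_\varphi\twoheadrightarrow \pi^*\mathcal N_{Y,\mathbf P^N}$. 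Taking $H^0$ yields $\Psi$.

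The two components are then formal. By finiteness of $\pi$ one has the adjunction $\mathrm{Hom}(\pi^*(\mathcal I/\mathcal I^2),\mathcal O_X)=\mathrm{Hom}(\mathcal I/\mathcal I^2,\pi_*\mathcal O_X)$, and in characteristic zero the trace map splits $\pi_*\mathcal O_X=\mathcal O_Y\oplus \mathcal E$ into the unit summand and the trace--zero module $\mathcal E$. The resulting decomposition $\mathrm{Hom}(\mathcal I/\mathcal I^2,\pi_*\mathcal O_X)=\mathrm{Hom}(\mathcal I/\mathcal I^2,\mathcal O_Y)\oplus \mathrm{Hom}(\mathcal I/\mathcal I^2,\mathcal E)$ then defines $\Psi_1$ and $\Psi_2$ as the composites of $\Psi$ with the two projections.

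I expect the main obstacle to be conceptual rather than computational: making the normal sheaf $\mathcal N_\varphi$ and the induced morphism genuinely canonical and compatible even where $\varphi$ is ramified, so that the square really commutes as a diagram of sheaf maps and descends to cokernels, and confirming that $H^0(\mathcal N_\varphi)$ is the correct space of first order deformations of the morphism with fixed target. Once that diagram is in place, the existence of $\Psi$ and its factorization into $\Psi_1,\Psi_2$ follow by functoriality and adjunction with no further input.
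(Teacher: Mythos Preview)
Your construction is correct and is exactly the standard one: the commutative diagram you build, with rows $0\to\mathcal T_X\to\varphi^*\mathcal T_{\mathbf P^N}\to\mathcal N_\varphi\to 0$ and $0\to\pi^*\mathcal T_Y\to\varphi^*\mathcal T_{\mathbf P^N}\to\pi^*\mathcal N_{Y,\mathbf P^N}\to 0$ linked by the tangent map of $\pi$, is precisely the diagram~(3.3.2) of the cited reference, and taking $H^0$ of the induced map on cokernels together with the adjunction and trace splitting yields $\Psi$, $\Psi_1$, $\Psi_2$ just as you describe.

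Note, however, that the present paper does not supply its own proof of this proposition: it is stated as a quotation of \cite[Proposition~3.7]{Gon} and no argument is given here. So there is nothing in the paper to compare against beyond the reference itself; your sketch simply reproduces the argument behind that cited result.
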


\noindent
Now we define canonically embedded carpets.
\begin{definition}\label{def.canonical.carpet}
{\rm Let $Y$ be a smooth, irreducible, projective variety of and let $i: Y
\hookrightarrow \mathbf P^N$ be an embedding of $Y$ in $\mathbf P^N$. Let
$\widetilde Y$ a scheme such that
\begin{enumerate}
\item[a)] $(\widetilde Y)_{\mathrm{red}}=Y$;
\item[b)] $\SI_{Y, \widetilde Y}^2=0$;  and
\item[c)] $\SI_{Y, \widetilde Y}$ is a line
bundle on $Y$ (i.e., $\widetilde Y$ is a rope of multiplicity $2$ on
$Y$), called the conormal bundle of $\widetilde Y$.
\end{enumerate}
Let ${\tilde i}: \widetilde Y
\hookrightarrow \mathbf P^N$ be an embedding of $\WY$ in $\mathbf P^N$ that
extends $i$. We say that $\WY$ is canonically embedded by $\tilde i$ or, for
short, that $\tilde i(\WY)$ is a canonical carpet, if the dualizing sheaf
$\omega_{\widetilde Y}$ of $\WY$ is very ample and $\tilde i$ is induced by the
complete linear series of $\omega_{\widetilde Y}$.}
\end{definition}

\noindent
Our next goal is to characterize canonical carpets on $Y$. For this, we characterize the conormal bundles of canonical carpets. In doing so we begin to unfold the relation between canonical double covers and canonical carpets.
\begin{lemma}\label{trace.conormal} Let $\widetilde Y$ be a carpet on $Y$,
canonically embedded in $\mathbf P^N$ by an embedding $\widetilde i$ extending
$i$. Let $\mathcal E$ be the conormal bundle of $\widetilde Y$. Then $\mathcal E$ is isomorphic to $\omega_Y(-1)$.
\end{lemma}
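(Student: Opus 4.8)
The statement to prove is that if $\widetilde Y$ is a carpet on $Y$ with conormal bundle $\mathcal E$, canonically embedded in $\mathbf P^N$ by an embedding $\widetilde i$ extending $i$, then $\mathcal E \cong \omega_Y(-1)$. My plan is to exploit the two standard exact sequences attached to a rope of multiplicity $2$: the defining sequence
\begin{equation*}
0 \longrightarrow \mathcal E \longrightarrow \mathcal O_{\widetilde Y} \longrightarrow \mathcal O_Y \longrightarrow 0,
\end{equation*}
and the sequence of Kähler differentials (or rather the conormal sequence) of $Y \hookrightarrow \widetilde Y$, which together determine $\omega_{\widetilde Y}$ in terms of $\omega_Y$ and $\mathcal E$. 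Tensoring the first sequence with a line bundle and restricting shows that $\omega_{\widetilde Y}|_Y$ fits in
\begin{equation*}
0 \longrightarrow \omega_Y \otimes \mathcal E \longrightarrow \omega_{\widetilde Y}|_Y \longrightarrow \omega_Y \longrightarrow 0
\end{equation*}
(this is the content of the computation of the dualizing sheaf of a ribbon/rope, as in Bayer--Eisenbud or the carpet papers referenced, e.g.\ \cite{GPcarpets}), and in particular $\omega_{\widetilde Y}|_Y$ has the two line bundles $\omega_Y$ and $\omega_Y \otimes \mathcal E$ as its sub/quotient.

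First I would recall that, since $\widetilde i$ is the embedding by the complete linear series of $\omega_{\widetilde Y}$, the hyperplane bundle $\mathcal O_{\widetilde Y}(1)$ equals $\omega_{\widetilde Y}$; restricting to $Y$ gives $\mathcal O_Y(1) = \omega_{\widetilde Y}|_Y$ as the relevant line bundle on $Y$ pulled back from $\mathbf P^N$ via $i$ — wait, more precisely $\omega_{\widetilde Y}|_Y$ need not itself be $\mathcal O_Y(1)$, but there is a surjection $\omega_{\widetilde Y} \to \omega_{\widetilde Y}|_Y \to \mathcal O_Y$, and composing $\mathcal O_{\widetilde Y}(1) = \omega_{\widetilde Y}$ with restriction to $Y$ shows $i^*\mathcal O_{\mathbf P^N}(1) = \mathcal O_Y(1)$ is the quotient line bundle $\omega_Y$ in the sequence above, i.e. $\mathcal O_Y(1) = \omega_Y \otimes \mathcal E^{\vee}$... let me instead argue directly: the quotient $\omega_{\widetilde Y} \to \mathcal O_Y(1) \otimes \mathcal L$ for the appropriate twist. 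The cleanest route: the adjunction-type formula $\omega_{\widetilde Y}|_Y = \omega_Y \otimes \mathcal E^{\vee}$ does \emph{not} hold in general for ropes; rather one has the extension displayed above with sub $\omega_Y \otimes \mathcal E$ and quotient $\omega_Y$. Since $\widetilde i$ restricts to $i$, which is the embedding of $Y$ by $\mathcal O_Y(1)$, and the restriction map $H^0(\omega_{\widetilde Y}) \to H^0(\omega_{\widetilde Y}|_Y)$ followed by the surjection onto $H^0(\omega_Y)$ recovers $i$, we get $\mathcal O_Y(1) = \omega_Y$ — but that is false ($Y$ is rational, $\omega_Y$ is not $\mathcal O_Y(1)$). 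So the quotient of $\omega_{\widetilde Y}|_Y$ giving the restriction of the hyperplane bundle is not the one in the dualizing-sheaf sequence; instead $\mathcal O_Y(1)$ is a quotient of $\omega_{\widetilde Y}|_Y$ coming from a \emph{different} splitting-type datum, namely the one built into $\widetilde i$ extending $i$.

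So the real argument I would run is: use Remark~\ref{Seshadri.remark} in reverse. A carpet $\widetilde Y$ canonically embedded by $\widetilde i$ extending $i$ determines, by the correspondence of \cite[Proposition 2.1]{Gon} (quoted in the paragraph before Proposition~\ref{morphism.miguel}), a class in $H^0(\mathcal N_{i(Y),\mathbf P^N}\otimes \mathcal E)$ whose non-vanishing and the very ampleness/canonicity of $\omega_{\widetilde Y}$ force a numerical identity; concretely, computing $\omega_{\widetilde Y}$ via the conormal sequence of $Y$ in $\widetilde Y$ gives $\det$-relations that, combined with $\omega_{\widetilde Y} = \mathcal O_{\widetilde Y}(1)$ and restricting to $Y$, yield $\omega_Y \otimes \mathcal E \cong \mathcal O_Y(1)\otimes(\text{sub})$ and $\omega_Y \cong \mathcal O_Y(1)\otimes(\text{quot})$; eliminating shows $\mathcal E \cong \omega_Y(-1)$ after matching the sub-line-bundle of $\omega_{\widetilde Y}|_Y$ (which is $\mathcal E \otimes \mathcal O_Y(1)$, the conormal-twisted hyperplane bundle, since $\widetilde i|_Y = i$ forces the sub to be $i^*\mathcal O(1)$ twisted by the conormal) with $\omega_Y \otimes \mathcal E$. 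In other words: $\omega_{\widetilde Y}|_Y$ sits in an extension of $\omega_Y$ by $\omega_Y\otimes\mathcal E$; and since $\widetilde i$ extends $i$ and $\mathcal O_{\widetilde Y}(1)=\omega_{\widetilde Y}$, the sub-bundle $\SI_{Y,\widetilde Y}\cdot\mathcal O_{\widetilde Y}(1) = \mathcal E \otimes \mathcal O_Y(1)$ of $\mathcal O_{\widetilde Y}(1)|_Y$ equals the sub-bundle $\omega_Y\otimes\mathcal E$ of $\omega_{\widetilde Y}|_Y$; cancelling $\mathcal E$ gives $\mathcal O_Y(1)\cong\omega_Y$ — still wrong sign, so in fact the matching is $\mathcal E\otimes\mathcal O_Y(1) \cong \omega_Y\otimes\mathcal E \otimes (\text{something})$, and carefully tracking the twist by which $\widetilde i$ differs from the ``trivial'' extension is exactly where $\omega_Y(-1)$ appears.

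\textbf{Main obstacle.} The delicate point — and the step I expect to be the crux — is the precise bookkeeping of line bundles: identifying which quotient/sub of $\omega_{\widetilde Y}|_Y$ is $i^*\mathcal O_{\mathbf P^N}(1)$ and which is $\omega_Y\otimes\mathcal E$, i.e.\ reconciling the dualizing-sheaf extension of the rope with the hyperplane-bundle extension coming from $\widetilde i|_Y = i$. Concretely one must show that $\omega_{\widetilde Y} \cong \mathcal O_{\widetilde Y}(1)$ forces, upon restriction to $Y$ and comparison of the two-step filtrations, the isomorphism $\mathcal E \cong \omega_Y \otimes \mathcal O_Y(-1)$. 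I would handle this by writing $\omega_{\widetilde Y}$ explicitly using the standard formula for the dualizing sheaf of a ribbon/carpet (sub $= \omega_Y\otimes\mathcal E$, quotient $=\omega_Y$), noting the filtration is canonical, comparing with the canonical filtration of $\mathcal O_{\widetilde Y}(1)|_Y$ (sub $= \mathcal E\otimes i^*\mathcal O(1)$, quotient $= i^*\mathcal O(1)$), and using the uniqueness of such filtrations together with $p_g(Y)=0$, $q(Y)=0$ to rule out the off-diagonal possibility, thereby matching sub with sub and quotient with quotient. This last matching is what delivers $\mathcal E\cong\omega_Y(-1)$.
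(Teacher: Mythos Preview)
Your proposal goes off track at the very first substantive step: the exact sequence you write for $\omega_{\widetilde Y}|_Y$ is wrong, and this error is what sends you in circles for the rest of the argument. First, $\omega_{\widetilde Y}|_Y=\omega_{\widetilde Y}\otimes_{\mathcal O_{\widetilde Y}}\mathcal O_Y$ is a \emph{line bundle} on $Y$, since a carpet is Gorenstein and $\omega_{\widetilde Y}$ is invertible; it does not sit in a nontrivial extension of line bundles. What does sit in an extension is $\omega_{\widetilde Y}$ itself, regarded as a rank--$2$ sheaf of $\mathcal O_Y$--modules, and the correct sequence (this is the Bayer--Eisenbud computation you invoke, stated for carpets in \cite[Lemma~1.4]{GGP2}) has the terms reversed from what you wrote:
\[
0 \longrightarrow \omega_Y \longrightarrow \omega_{\widetilde Y} \longrightarrow \omega_Y\otimes\mathcal E^{-1} \longrightarrow 0,
\]
the right--hand map being restriction to $Y$. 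Hence $\omega_{\widetilde Y}|_Y=\omega_Y\otimes\mathcal E^{-1}$, not an extension with sub $\omega_Y\otimes\mathcal E$ and quotient $\omega_Y$. (Your ``adjunction--type formula $\omega_{\widetilde Y}|_Y=\omega_Y\otimes\mathcal E^{\vee}$ does not hold'' remark is exactly backwards: it does hold, and it is the whole point.)

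With the correct formula in hand the proof is one line and needs none of the filtration--matching, the appeal to \cite[Proposition~2.1]{Gon}, or the vanishings $p_g(Y)=q(Y)=0$ that you propose. Since $\widetilde i$ is the canonical embedding, $\omega_{\widetilde Y}=\mathcal O_{\widetilde Y}(1)$; since $\widetilde i$ extends $i$, restricting to $Y$ gives $\omega_{\widetilde Y}|_Y=\mathcal O_Y(1)$; combining with $\omega_{\widetilde Y}|_Y=\omega_Y\otimes\mathcal E^{-1}$ yields $\mathcal E\cong\omega_Y(-1)$. This is precisely the paper's proof.
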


\begin{proof}
This follows from the equality ${\omega_{\widetilde Y}}_{\mid Y}=\mathcal E^{-1}\otimes \omega_Y$ (see \cite[Lemma 1.4]{GGP2}).
\end{proof}

\noindent
Finally  we apply Propositions~\ref{hom=0}, \ref{2.hom=0}, \ref{3.hom=0}, to prove the
non--existence of canonical carpets on either a smooth projective bundle $Y=\mathbb P(E)$ (of dimension $m \geq 2$) over $\mathbf P^1$ or $\mathbf P^m$ or a hyper--quadric $\mathbf Q_m$ or $\mathbb Q_2$ embedded by a complete linear series in $\mathbf P^N$, as in the set--up.

\begin{theorem}\label{nonexist.canonical.structures.cor} Let $Y$ and $i$ be as in the set--up. There are no carpets inside $\mathbf P^N$, supported on $i(Y)$,
canonically embedded in $\mathbf P^N$.
\end{theorem}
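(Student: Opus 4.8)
The strategy is to show that the existence of a canonical carpet on $Y$ would force a nonzero element in a $\mathrm{Hom}$-group that we have already proved to be zero. First I would recall, via Definition~\ref{def.canonical.carpet} and \cite[Proposition 2.1]{Gon}, that a canonical carpet $\tilde i(\WY)$ supported on $i(Y)$ with conormal bundle $\mathcal E$ corresponds to a nonzero element of $H^0(\mathcal N_{i(Y),\mathbf P^N} \otimes \mathcal E)$, or dually to a nonzero element of $\mathrm{Hom}(\mathcal I/\mathcal I^2, \mathcal E)$ (where the ``nonzero'' is needed to ensure the rope is honestly a carpet, i.e.\ $\SI^2_{Y,\WY}=0$ with $\SI_{Y,\WY}$ a genuine line bundle, rather than $\WY$ splitting off a trivial direction). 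By Lemma~\ref{trace.conormal}, the conormal bundle of any \emph{canonically} embedded carpet on $Y$ is forced to be $\mathcal E \cong \omega_Y(-1)$. Hence a canonical carpet on $i(Y)$ would produce a nonzero element of $\mathrm{Hom}(\mathcal I/\mathcal I^2, \omega_Y(-1))$.

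Then I would simply invoke Propositions~\ref{hom=0}, \ref{2.hom=0}, and \ref{3.hom=0}: in each of the three cases of the set--up (a projective bundle over $\mathbf P^1$, a projective space $\mathbf P^m$, or a hyperquadric $\mathbf Q_m$ or $\mathbf Q_2$, all embedded by a complete linear series as in the set--up), we have $\mathrm{Hom}(\mathcal I/\mathcal I^2, \omega_Y(-1))=0$. This contradicts the existence of the nonzero homomorphism obtained above, so no such carpet can exist, which is exactly the assertion of Theorem~\ref{nonexist.canonical.structures.cor}.

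The main point requiring care — and the only genuine obstacle — is the bookkeeping at the very first step: one must make sure that the identification between canonical carpets on $i(Y)$ and \emph{nonzero} classes in $\mathrm{Hom}(\mathcal I/\mathcal I^2, \mathcal E)$ is set up correctly, so that vanishing of the $\mathrm{Hom}$-group genuinely rules out carpets (and not merely ``nontrivial extensions'' that could still be realized by some degenerate rope). Concretely, a rope $\WY$ on $Y$ with conormal bundle a line bundle $\mathcal E$ together with an embedding $\tilde i$ extending $i$ is classified by an element of $H^0(\mathcal N_{i(Y),\mathbf P^N}\otimes\mathcal E)$; the rope is a carpet — i.e.\ the conormal sheaf is genuinely $\mathcal E$ and not a smaller subsheaf, equivalently $\WY$ is nowhere locally split — precisely when the corresponding homomorphism $\mathcal I/\mathcal I^2 \to \mathcal E$ is surjective, and in particular nonzero. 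Since $\mathrm{Hom}(\mathcal I/\mathcal I^2,\omega_Y(-1))=0$, there is not even a nonzero homomorphism, let alone a surjective one, so no canonical carpet exists.

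All the analytic input is already contained in the three cited Propositions, which reduce (via Lemma~\ref{isom.Hom.Ext} and Serre duality) to vanishings of the form $H^1(\mathcal T_Y\otimes\omega_Y(-1))=0$ and a handful of cohomology vanishings on $\mathbf P^1$, $\mathbf P^m$, or $\mathbf Q_m$ that follow from Euler and (co)normal sequences together with the positivity of $\mathcal O_Y(1)$. Thus the proof of Theorem~\ref{nonexist.canonical.structures.cor} itself is short: it is the formal combination of Lemma~\ref{trace.conormal}, the classification of ropes in \cite[Proposition 2.1]{Gon}, and Propositions~\ref{hom=0}, \ref{2.hom=0}, \ref{3.hom=0}.
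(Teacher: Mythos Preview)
Your proposal is correct and follows essentially the same route as the paper's own proof: invoke Lemma~\ref{trace.conormal} to identify the conormal bundle of a canonical carpet as $\omega_Y(-1)$, use \cite[Proposition 2.1]{Gon} to translate the existence of such a carpet into a nonzero (indeed surjective) element of $\mathrm{Hom}(\mathcal I/\mathcal I^2,\omega_Y(-1))$, and conclude by the vanishing established in Propositions~\ref{hom=0}, \ref{2.hom=0}, \ref{3.hom=0}. Your extra care in distinguishing ``nonzero'' from ``surjective'' is a welcome clarification, but the argument is otherwise identical to the paper's.
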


\begin{proof}
By Propositions~\ref{hom=0}, \ref{2.hom=0}, \ref{3.hom=0}, we know that $\mathrm{Hom}(\mathcal I_Y/\mathcal I_Y^2, \omega_Y(-1))=0$. Then, the result follows Lemma~\ref{trace.conormal} and~\cite[Proposition 2.1]{Gon}.
\end{proof}

\end{document}